\newcommand{\de}{\partial}
\newcommand{\Ric}{\mathrm{Ric}}
\newcommand{\db}{\overline{\partial}}
\newcommand{\C}{\mathbb{C}}
\newcommand{\mn}{\sqrt{-1}}
\newcommand{\ti}[1]{\tilde{#1}}
\newcommand{\ve}{\varepsilon}
\newcommand{\diam}{\mathrm{diam}}
\begin{document}
\newcounter{remark}
\newcounter{theor}
\setcounter{remark}{0}
\setcounter{theor}{1}
\newtheorem{claim}{Claim}
\newtheorem{thm}{Theorem}[section]
\newtheorem{prop}{Proposition}[section]
\newtheorem{lemma}{Lemma}[section]
\newtheorem{defn}{Definition}[theor]
\newtheorem{cor}{Corollary}[section]
\newenvironment{remark}[1][Remark]{\addtocounter{remark}{1} \begin{trivlist}
\item[\hskip
\labelsep {\bfseries #1  \thesection.\theremark}]}{\end{trivlist}}
\setlength{\arraycolsep}{2pt}

\title{K\"ahler-Ricci flow on stable Fano manifolds}
\author{Valentino Tosatti}
 \address{Department of Mathematics \\ Harvard University \\ Cambridge, MA 02138}

  \email{tosatti@math.harvard.edu}
\begin{abstract} 
We study the K\"ahler-Ricci flow on Fano manifolds. We show that if the curvature is bounded along the flow and if the manifold is K-polystable and asymptotically Chow semistable, then the flow converges exponentially fast to a K\"ahler-Einstein metric.
\end{abstract}
\maketitle
\section{Introduction}
The Ricci flow was introduced by Hamilton in \cite{Ha1} and is a way to deform a Riemannian metric by a nonlinear parabolic evolution equation. On a compact complex manifold $M$ if the initial metric is K\"ahler then the flow will preserve this property, and is thus called the K\"ahler-Ricci flow. It was first studied by Cao in \cite{Co}, where he showed that a solution of the normalized flow exists for all positive time, assuming the first Chern class of the manifold is definite and the initial metric lies in the correct class. 
If the first Chern class is zero or negative the behavior of the flow is perfectly understood \cite{Ya}, \cite{aubin}, \cite{Co}, so we will restrict our attention to compact K\"ahler manifolds with positive first Chern class, that is Fano manifolds.
The normalized K\"ahler-Ricci flow equation then takes the form
\begin{equation}\label{metricke}
\frac{\de}{\de t}\omega_t=-\Ric(\omega_t)+\omega_t,
\end{equation}
where $\omega_t$ is a family of K\"ahler metrics cohomologous to
$c_1(M)$. The fixed points of the flow are K\"ahler-Einstein metrics with positive scalar curvature, that is metrics $\omega_{KE}$ that satisfy
\begin{equation}\label{keke}
\Ric(\omega_{KE})=\omega_{KE}.
\end{equation}
If such a metric exists then it is unique up to the action of $\mathrm{Aut}^0(M)$, the connected component of the identity of the biholomorphism group of $M$ \cite{BM}. In general there are obstructions to the existence of K\"ahler-Einstein metrics, and these fall into two categories: obstructions arising from $\mathrm{Aut}^0(M)$, such as Matsushima's Theorem \cite{Ma} or the Futaki invariant \cite{Fu}, and obstructions arising from stability. A long-standing conjecture of Yau \cite{Ya2} says that a K\"ahler-Einstein metric should exist precisely when the manifold, polarized by the anticanonical bundle $K_M^{-1}$, is stable in a suitable algebro-geometric sense. The precise notion of stability involved is called {\em K-polystability}, has been introduced by Tian \cite{Ti3} and refined by Donaldson \cite{D3}. The Yau-Tian-Donaldson's conjecture then states that the existence of a K\"ahler-Einstein metric on a Fano manifold $M$ is equivalent to K-polystability of $(M,K_M^{-1})$. There has been much progress on the subject, see for example \cite{Si}, \cite{Ti1},  \cite{Ti2}, \cite{TY}, \cite{Na}, \cite{D1}, \cite{Ti3}, \cite{WZ}, \cite{ZZ} but the conjecture is still open in general. A natural approach to this conjecture is to show that the K\"ahler-Ricci flow \eqref{metricke} converges to a K\"ahler-Einstein metric. Since we know that the flow \eqref{metricke} exists for all time the issue is to show that stability implies convergence of the flow at infinity. Despite some recent powerful estimates of Perelman \cite{Pe}, \cite{ST}, this seems to be out of reach at present. On the other hand some progress has been done under the assumption that the curvature remains bounded along the flow. In \cite{PS2}, \cite{PSSW2}, \cite{Sz} it is shown that if this holds and if the manifold is stable in some different analytic ways, then the flow converges to a K\"ahler-Einstein metric. For more recent work on the K\"ahler-Ricci flow and K\"ahler-Einstein metrics, the reader can consult for example \cite{CT}, \cite{TZ}, \cite{Ch}, \cite{PSS}, \cite{Ru}, \cite{Zhu}, \cite{SoT}. This is not an exhaustive list in any sense, and we refer to the above works for more references.
Our main result is the following (the relevant definitions are in section 3):
\begin{thm}\label{main}
Let $M$ be a compact complex manifold with $c_1(M)>0$ (that is a Fano manifold). Assume that along the K\"ahler-Ricci flow \eqref{metricke} the sectional curvatures remain bounded
\begin{equation}\label{curvature}
|\mathrm{Rm}_t|\leq C.
\end{equation}
Assume moreover that
$(M,K_M^{-1})$ is K-polystable and asymptotically Chow semistable. Then the
flow converges exponentially fast to a
K\"ahler-Einstein metric on $M$.
\end{thm}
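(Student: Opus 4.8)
The plan is to combine Perelman's estimates for the Kähler-Ricci flow with the bounded curvature hypothesis \eqref{curvature} to produce a subsequential limit of the flow that is a gradient shrinking Kähler-Ricci soliton, then to invoke the two stability assumptions to force this soliton to be a Kähler-Einstein metric on $M$ itself, and finally to upgrade subsequential convergence to exponential convergence of the whole flow.

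First I would collect the a priori estimates. Along the flow \eqref{metricke} Perelman's monotonicity yields uniform bounds on the scalar curvature, the diameter and a non-collapsing (lower volume) bound, together with a $C^0$ bound on the Ricci potential. Combined with the standing assumption $|\mathrm{Rm}_t|\leq C$, Hamilton's compactness theorem applies: from any sequence $t_i\to\infty$ one extracts a subsequence along which $(M,g_{t_i})$ converges in the Cheeger-Gromov sense, modulo diffeomorphisms, to a limit $(M_\infty,g_\infty,J_\infty)$, where non-collapsing and the fixed topology prevent collapse or loss of compactness. Next I would identify this limit: Perelman's $\mu$-functional is monotone and bounded along the flow, so its time derivative is integrable, and choosing the times $t_i$ where this derivative tends to zero the rigidity in the monotonicity formula forces $\Ric(\omega_\infty)-\omega_\infty=L_X\omega_\infty$ for a holomorphic gradient vector field $X=\grad f$, i.e. $\omega_\infty$ is a gradient shrinking Kähler-Ricci soliton.

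The third step — where I expect the main difficulty — is to use stability to show that $(M_\infty,J_\infty)$ is biholomorphic to $(M,J)$ and that $X$ vanishes. By Tian's theorem and the uniform estimates, the embeddings of $(M,\omega_{t_i})$ by high powers of $K_M^{-1}$ are uniformly close to their Fubini-Study normalizations, so the limit corresponds to a limiting Chow point; I expect \emph{asymptotic Chow semistability} to guarantee that this limit is again the Chow point of $M$, ruling out a jump of the complex structure to a degeneration and identifying $M_\infty\cong M$. Once the limit lives on $M$, the soliton vector field $X$ generates a one-parameter subgroup whose associated (modified) Futaki invariant is obstructed, and \emph{K-polystability} forces $X=0$, so $\omega_\infty$ is a Kähler-Einstein metric. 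The delicate point throughout is to track the complex structure under Cheeger-Gromov convergence and to match the analytic soliton data with the algebro-geometric invariants, which is precisely what the two stability hypotheses are designed to control.

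Finally, having produced a Kähler-Einstein metric $\omega_{KE}$ and knowing it is the unique subsequential limit up to $\mathrm{Aut}^0(M)$, I would linearize the flow near $\omega_{KE}$. A Lojasiewicz-Simon inequality for the relevant energy functional — equivalently, strict positivity of the linearized operator transverse to the automorphism directions — shows that once the flow enters a small neighborhood of the Kähler-Einstein orbit it remains there and converges exponentially fast, thereby promoting the subsequential convergence obtained above to convergence of the entire flow.
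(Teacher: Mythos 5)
Your first step (Perelman's scalar curvature, diameter and non-collapsing bounds, plus \eqref{curvature} and Hamilton compactness, yielding subsequential soliton limits $(\omega_\infty,J_\infty)$) matches the paper. The genuine gap is in your third step. Asymptotic Chow \emph{semi}stability cannot ``guarantee that the limiting Chow point is again the Chow point of $M$'': semistability does not prevent jumping of the complex structure, since a semistable orbit need not be closed and its closure may contain other semistable points; worse, the flow is not the orbit of a one-parameter subgroup of $SL(N_m,\C)$, so there is no a priori Chow-point degeneration to which GIT applies at all. Likewise, K-polystability of $(M,K_M^{-1})$ constrains test configurations \emph{of $M$}, whereas your soliton field $X$ lives on $(M,J_\infty)$, a priori a different complex structure; without producing a test configuration of $M$ with central fiber $(M,J_\infty)$, the hypothesis says nothing about the (modified) Futaki invariant of $X$, so your claim that ``K-polystability forces $X=0$'' is unsupported. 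The paper closes exactly these holes by a different mechanism: (a) if along some sequence $\frac{\de}{\de t}\mathcal{M}_\omega(\varphi_{t_i})\to 0$, the limit soliton is K\"ahler--Einstein on $J_\infty$ and Sz\'ekelyhidi's perturbation theorem (Theorem 2 of \cite{Sz}), applied because $J$ is a small deformation of $J_\infty$ and $M$ is K-polystable, produces a K\"ahler--Einstein metric \emph{on $M$} -- this is where K-polystability actually enters, not through a Futaki computation on the limit; (b) otherwise the Mabuchi energy decays linearly, $\frac{\de}{\de t}\mathcal{M}_\omega(\varphi_t)\leq-\gamma$, and Chow semistability is used \emph{quantitatively}: by Proposition \ref{lowerbo} (via Zhang's theorem and Kempf--Ness) it is equivalent to a lower bound $\ti{\mathcal{L}}_m(\varphi)\geq -C$ on Donaldson's functionals, while a uniform Tian--Yau--Zelditch--Catlin expansion along the flow (Proposition \ref{zeld}, where \eqref{curvature} and Shi's estimates are essential) shows $\frac{\de}{\de t}\ti{\mathcal{L}}_m\approx\frac{1}{2}\frac{\de}{\de t}\mathcal{M}_\omega$ up to $\ve_1$, whence $-C\leq\ti{\mathcal{L}}_m(\varphi_t)\leq-(\gamma/2-\ve_1)t+C$, a contradiction. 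This quantitative bridge between the algebraic hypothesis and the flow is entirely absent from your proposal, and without it neither stability assumption is ever actually brought to bear.

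Your final step has a related problem: the Lojasiewicz--Simon argument presupposes that the flow enters a neighborhood of the K\"ahler--Einstein orbit on $(M,J)$, but your subsequential limits are only limits modulo diffeomorphisms with a possibly different complex structure $J_\infty$. Identifying $J_\infty$ with $J$ requires an argument -- in the paper, the K\"ahler--Einstein metrics on $(M,J_i)$ produced by \cite{Sz} are compared via Bando--Mabuchi uniqueness, Gromov--Hausdorff distance and Matsushima's theorem to conclude $J\cong J_\infty$ -- after which the paper deduces convergence modulo diffeomorphisms using the lower bound on $\mathcal{M}_\omega$ from \cite{BM} and the arguments of \cite{PS2}, and then quotes \cite{PSSW1} for exponential convergence. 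Your stability-analysis alternative for the last step is plausible in principle, but only once that identification has been made.
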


Let us comment on the hypotheses of our theorem. The assumption that the curvature remains bounded is certainly very strong, as it basically only allows smooth manifolds as geometric limits at infinity. But there are some cases when we know that it is satisfied: if $M$ is a Fano surface then by \cite{TY}, \cite{Ti2}, \cite{Ko1}, \cite{WZ} it admits a K\"ahler-Ricci soliton. If it is also K-polystable then the soliton must be trivial, that is K\"ahler-Einstein. Then a result of Perelman (see \cite{TZ}) implies that the flow \eqref{metricke} converges smoothly to a K\"ahler-Einstein metric, so in particular \eqref{curvature} holds. On the other hand, if $M$ admits a metric with nonnegative bisectional curvature $\omega_0$ then the flow starting from it will also have this property \cite{Ba}, \cite{Mo}, and Perelman's result that the scalar curvature remains bounded (see \cite{ST}) implies that \eqref{curvature} holds. Of course it would be desirable to have a proof that on Fano surfaces \eqref{curvature} holds, without using the existence of K\"ahler-Einstein metrics. 

Let us also remark that according to \cite{RZZ} if $n\geq 3$ then \eqref{curvature} follows from the weaker bound
\begin{equation}\label{ln}
\int_M |\mathrm{Rm}_t|^n\omega_t^n\leq C.
\end{equation}
Notice that when $n=2$ the bound \eqref{ln} always holds thanks to Perelman's scalar curvature bound \cite{ST}. 

Another point to notice is that in our theorem we need to assume asymptotic Chow semistability as well as K-polystability. This is not really a problem because Chow semistability is an algebraic GIT notion and so it is in line with Yau's conjecture. Moreover it is very likely that asymptotic Chow semistability is implied by K-polystability (see \cite{RT}), especially when $M$ has no nonzero holomorphic vector fields because of \cite{D1}. 

Let us say a few words about the strategy of proof of the main theorem. The boundedness of curvature together with some results of Perelman imply that we can take convergent subsequences of the flow as time goes to infinity, and get as limits K\"ahler-Ricci solitons on Fano manifolds diffeomorphic to $M$. The complex structures that we get are in general different from the original one, but the original one is a sufficiently small deformation of them. If any of these solitons happens to be K\"ahler-Einstein then the assumption of K-polystability together with a theorem of Sz\'ekelyhidi \cite{Sz} imply that the original manifold also admits a K\"ahler-Einstein metric. Otherwise if all the solitons are nontrivial we show that the Mabuchi energy must be unbounded below along the flow, and in fact it must decrease at least linearly. Then we show that the Mabuchi energy is suitably approximated by a sequence of functionals $\ti{\mathcal{L}}_m$, originally defined by Donaldson in \cite{D2}. This approximation result relies on showing that the so-called Tian-Yau-Zelditch-Catlin expansion holds uniformly along the flow, a result that uses \eqref{curvature}. Finally the assumption of asymptotic Chow semistability is shown to be equivalent to the fact that the functionals $\ti{\mathcal{L}}_m$ are bounded below for $m$ large, and this contradicts the decay of the Mabuchi energy.

The organization of the paper is as follows: in section 2 we set up the notation and we state some results from K\"ahler geometry and Ricci flow that we will use. In section 3 we link the algebraic condition of Chow semistability to some energy functionals. In section 4 we prove our main Theorem \ref{main}.\\

\noindent
{\bf Acknowledgments.} I thank my advisor Prof. Shing-Tung Yau for his constant support and help. I am also grateful to G\'abor Sz\'ekelyhidi for many stimulating discussions and to Ben Weinkove for useful comments. Part of this work was carried out when I was visiting the Morningside Center of Mathematics in Beijing, which I thank for the gracious hospitality. I was also partially supported by a Harvard Merit Fellowship. These results will form part of my forthcoming PhD thesis at Harvard University.

\section{Notation and Basic Results}
In this section we set up the notation used throughout the paper, and we recall some results about the K\"ahler-Ricci flow that will be used extensively.\\

In this paper $(M,J)$ denotes a compact complex manifold of complex dimension $n$ and with positive first Chern class $c_1(M)>0$. We will often drop the reference to the complex structure $J$. For any K\"ahler metric $\omega\in c_1(M)$ we can write it locally as
$$\omega=\frac{\mn}{2\pi} g_{i\overline{j}}dz^i\wedge d\overline{z}^j,$$
where $g=(g_{i\overline{j}})$ is the Hermitian metric determined by $\omega$ and $J$. Here we are using the Einstein summation convention. The volume form associated to $\omega$ is $\frac{\omega^n}{n!}$, and we will denote by $V$ the volume of $M$
$$V=\int_M\frac{\omega^n}{n!}.$$
We will write
$\Delta_\omega$ for the Laplacian of $g$, which acts on a function $F$ as
$$\Delta_\omega F=g^{i\overline{j}}\frac{\de^2 F}{\de z^i\de \overline{z}^j}.$$ 
The Ricci curvature of $\omega$ is the tensor locally defined by
$$R_{i\overline{j}}=-\frac{\de}{\de z^i}\frac{\de}{\de \overline{z}^j}
\log\det(g),$$
and we associate to it the Ricci form
$$\Ric(\omega)=\frac{\mn}{2\pi} R_{i\overline{j}}dz^i\wedge d\overline{z}^j.$$
It is a closed real $(1,1)$-form that represents the cohomology class $c_1(M)\in H^2(M,\mathbb{Z})$. The scalar curvature of $\omega$ is denoted by
$$R=g^{i\overline{j}}R_{i\overline{j}}.$$
To the metric $\omega$ we can associate
its Ricci potential $f_\omega$, which is the real function defined by $$\Ric(\omega)=\omega+\mn\de\db f_\omega,$$
and $$\int_M (e^{f_\omega}-1)\frac{\omega^n}{n!}=0.$$
The space of K\"ahler potentials of the metric $\omega$ is the set of all smooth real functions $\varphi$ such that $\omega_\varphi=\omega+\mn\de\db\varphi$ is a K\"ahler metric.
Then we can define a real-valued functional $F^0_\omega$ on the space of K\"ahler potentials by the formula
$$F^0_\omega(\varphi)=-\frac{1}{V}\int_0^1\int_M \frac{\de \varphi_t}{\de t} \frac{\omega_{\varphi_t}^n}{n!},$$
where $\varphi_t$ is any smooth path of K\"ahler potentials with $\varphi_0=0$ and $\varphi_1=\varphi$ (for example one can take $\varphi_t=t\varphi$).
It can be written also as
\begin{equation}\label{f0}
F^0_\omega(\varphi)=J_\omega(\varphi)-\frac{1}{V}\int_M\varphi\frac{\omega^n}{n!},
\end{equation}
where the functional $J_\omega$ is defined by
$$J_\omega(\varphi)=\frac{1}{V}\int_0^1\int_M \frac{\de \varphi_t}{\de t}\left(\frac{\omega^n}{n!}-\frac{\omega_{\varphi_t}^n}{n!}\right),$$
and integration by parts shows that $J_\omega(\varphi)\geq 0$. Moreover 
$F^0_\omega$ satisfies the following cocycle condition
\begin{equation}\label{cocycle}
F^0_\omega(\varphi)=F^0_\omega(\psi)+F^0_{\omega_\psi}(\varphi-\psi),
\end{equation}
for all K\"ahler potentials $\varphi, \psi$.

We now consider the space $H^0(M,K_M^{-m})$ of holomorphic sections 
of the $m$th plurianticanonical bundle, where $m\geq 1$. This is a vector space whose
dimension $N_m$ can be computed from the Riemann-Roch formula, when $m$ is large
\begin{equation}\label{rr}
N_m=\int_M \mathrm{ch}(K_M^{-m})\wedge\mathrm{Todd}(M)\approx Vm^n+\frac{nV}{2}m^{n-1}+O(m^{n-2}).
\end{equation}
Let us fix $h$ a Hermitian metric along the fibers of $K_M^{-1}$ with curvature equal to $\omega$. This induces metrics $h^m$ on the tensor powers $K_M^{-m}$.
For each given positive integer $m$ we also fix $\{S_i\}$ a basis of $H^0(M,K_M^{-m})$ which is orthonormal with respect to the $L^2$ inner product defined by $h^m$, $\omega^n$:
$$\int_M \langle S,T\rangle_{h^m}\frac{\omega^n}{n!}.$$
Then we can define the ``density of states'' function 
\begin{equation}\label{rr2}
\rho_m(\omega)=\sum_{i=1}^{N_m}|S_i|^2_{h^m}.
\end{equation}
It does not depend on the choice of orthonormal basis $\{S_i\}$ or on the choice of $h$, and so it is canonically attached to $\omega$ and $J$. Its name stems from the property that
\begin{equation}\label{rr3}
\int_M \rho_m(\omega)\frac{\omega^n}{n!}=N_m.
\end{equation}
The Tian-Yau-Zelditch-Catlin expansion is the following 
\begin{thm}[Zelditch \cite{Ze}, Catlin \cite{Ct}]\label{zc} When $m$ is large we have an expansion
\begin{equation}\label{rr4}
\rho_m(\omega)\approx m^n+a_1(\omega)m^{n-1}+a_2(\omega)m^{n-2}+\dots,
\end{equation}
where $a_i(\omega)$ are smooth functions defined locally by $\omega$, and the expansion is valid in any $C^k(\omega)$ norm. More precisely this means that 
given any $k,N\geq 1$ there is a constant $C$ that depends only on $k,N,\omega$ such that
$$\left\|\rho_m(\omega)-m^n-\sum_{i=1}^N a_i(\omega)m^{n-i} \right\|_{C^k(\omega)}\leq C m^{n-N-1},$$
for all $m\geq 1$.
\end{thm}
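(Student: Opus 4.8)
The plan is to realize $\rho_m(\omega)$ as the restriction to the diagonal of the Bergman kernel of $H^0(M,K_M^{-m})$ and to extract its asymptotics from the microlocal structure of the Szeg\"o kernel on the associated circle bundle, following Zelditch. First I would set $L=K_M^{-1}$ and let $X\subset L^{*}$ be the unit circle bundle of the dual, with its natural $S^1$-action $r_\theta$; since $\omega$ equals the curvature and $\omega>0$, the manifold $X$ is strictly pseudoconvex. Holomorphic sections of $L^m$ correspond to CR functions on $X$ of weight $m$, so the Hardy space decomposes as $H^2(X)=\bigoplus_{m\geq 0}H^2_m(X)$ with $H^2_m(X)\cong H^0(M,L^m)$. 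Writing $\Pi$ for the Szeg\"o projector onto $H^2(X)$ and $\Pi_m$ for its $m$-th Fourier component, a direct check identifies the density of states with the diagonal value $\rho_m(\omega)(x)=\Pi_m(x,x)$.

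The key analytic input is the Boutet de Monvel--Sj\"ostrand parametrix, which represents the Szeg\"o kernel, modulo a smoothing operator, as a Fourier integral operator with complex phase:
\[
\Pi(x,y)\equiv\int_0^\infty e^{\mn\, t\psi(x,y)}\,s(x,y,t)\,dt,
\]
where $\psi$ is a complex phase with $\mathrm{Im}\,\psi\geq 0$ vanishing to second order on the diagonal, and $s(x,y,t)\sim\sum_{k\geq 0}t^{n-k}s_k(x,y)$ is a classical symbol. Taking the $m$-th Fourier coefficient and restricting to the diagonal gives
\[
\rho_m(\omega)(x)=\frac{1}{2\pi}\int_{-\pi}^{\pi}\int_0^\infty e^{\mn\, t\psi(r_\theta x,x)-\mn\, m\theta}\,s(r_\theta x,x,t)\,dt\,d\theta.
\]
Next I would rescale $t=m\sigma$ and apply the method of stationary phase to the resulting oscillatory integral with large parameter $m$ and phase $\Phi(\theta,\sigma)=\sigma\psi(r_\theta x,x)-\theta$. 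The only critical point on the diagonal occurs at $\theta=0$, with $\sigma$ fixed by $\de_\theta\Phi=0$, and its complex Hessian is nondegenerate thanks to the positivity of $\omega$; the amplitude contributes $(m\sigma)^n s_0$ at leading order. A bookkeeping of the stationary-phase expansion then yields $\rho_m(\omega)(x)\sim m^n+a_1(\omega)(x)m^{n-1}+\cdots$, where each $a_i(\omega)$ is read off from the Taylor coefficients of $\psi$ and $s$ at the critical point. Since these are built from the germ of the metric at $x$, the $a_i(\omega)$ are smooth and locally determined, polynomial in the curvature and its covariant derivatives, and the normalization of $\omega$ forces the leading coefficient to be $1$. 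The uniform $C^k$ remainder estimate follows by differentiating the oscillatory integral in $x$ and invoking stationary phase with remainder: the error after $N$ terms is governed by finitely many derivatives of $\psi$ and $s$, hence by finitely many derivatives of $\omega$, giving the bound $O(m^{n-N-1})$.

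I expect the main obstacle to be the parametrix itself, that is, the precise microlocal description of $\Pi$ as a Fourier integral operator with complex phase; this rests on the analysis of the $\overline{\partial}$-Neumann and $\overline{\partial}_b$ problems on the strictly pseudoconvex domain bounded by $X$, and is the deep ingredient. Granting it, the stationary-phase computation and the tracking of derivatives needed for the $C^k$-uniform remainder are essentially bookkeeping, though some care is required to ensure the constants depend only on finitely many derivatives of $\omega$. An alternative, more self-contained route avoids the Szeg\"o kernel entirely: one works directly on $M$, fixes K\"ahler normal coordinates and a normalized local holomorphic frame at a point $x$, rescales $z=w/\sqrt{m}$ so that $m$ times the local weight converges to the Bargmann--Fock weight $|w|^2$ on $\C^n$, and constructs the Bergman kernel perturbatively in powers of $m^{-1/2}$ by solving $\overline{\partial}$ with H\"ormander's $L^2$-estimates; the model kernel on the diagonal produces the leading $m^n$, and the perturbation yields the same local coefficients $a_i(\omega)$.
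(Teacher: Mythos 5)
The paper offers no proof of this statement---it is quoted with attribution to Zelditch \cite{Ze} and Catlin \cite{Ct}---and your proposal reproduces precisely Zelditch's argument: identifying $\rho_m(\omega)$ with the diagonal of the $m$-th Fourier component of the Szeg\H{o} kernel on the unit circle bundle in $L^*=K_M$, inserting the Boutet de Monvel--Sj\"ostrand parametrix, rescaling $t=m\sigma$, and applying stationary phase at the critical point on the diagonal, with the $C^k$-uniform remainder obtained by differentiating the oscillatory integral. Your sketch is correct and essentially identical to the cited proof (your alternative local rescaling route is likewise a known valid path, closer to Tian's original approach and its refinements), so nothing needs to be added.
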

Moreover Z. Lu \cite{L} has computed that $a_1(\omega)=\frac{R}{2}$.
The expansion \eqref{rr4} integrates term by term to the Riemann-Roch expansion \eqref{rr}.

Now we turn to the K\"ahler-Ricci flow \eqref{metricke}. The initial metric of the flow is denoted by $\omega$ and it will be considered as a reference metric.
Since $\omega_t$ and $\omega$ are cohomologous, we can write
$\omega_t=\omega+\mn\de\db\varphi_t$, where the normalization of the
potentials $\varphi_t$ will be specified presently. Then
\eqref{metricke} is equivalent to the following parabolic
Monge-Amp\`ere equation
\begin{equation}\label{ke}
(\omega+\mn\de\db\varphi_t)^n= e^{f_\omega-\varphi_t+\dot\varphi_t}\omega^n,
\end{equation}
where $\dot\varphi_t$ denotes $\de\varphi_t/\de t$, and $\varphi_0$ equals the constant 
\begin{equation}\label{norm}
\frac{1}{V}\int_M f_\omega \frac{\omega^n}{n!}+\int_0^\infty 
e^{-t}\left(\int_M |\nabla \dot{\varphi}_t|^2_{t} \frac{\omega_t^n}{n!}
\right)dt,
\end{equation}
and it is easy to see that this is well defined.
The choice of $\varphi_0$ determines the normalization of
$\varphi_t$ for $t>0$, and is necessary to get convergence of the flow since it is the only choice that ensures that $\dot{\varphi}_t$ is bounded at infinity (see \cite{PSS}, \cite{CT} and \eqref{perelman} below).
We now state Perelman's estimates for the K\"ahler-Ricci flow on Fano manifolds, and we refer to the exposition \cite{ST} and to \cite{Pe} for proofs.
\begin{thm}[Perelman] With the above choice of $\varphi_0$,
there is a constant $C$ that depends only on $\omega$ such that for all $t\geq 0$ we have
\begin{equation}\label{perelman}
|\dot{\varphi}_t|+\diam(M,\omega_t)+|R_t|\leq C,
\end{equation}
where $R_t$ denotes the scalar curvature of $\omega_t$.
Moreover given any $r_0>0$ there is a constant $\kappa>0$ that depends only on $r_0$ and $\omega$ such that for all $t\geq 0$, all $p\in M$ and all $0<r<r_0$ we have
\begin{equation}\label{perelman2}
\int_{B_t(p,r)}\frac{\omega_t^n}{n!}\geq \kappa r^{2n},
\end{equation}
where $B_t(p,r)$ is the geodesic ball in the metric $\omega_t$ centered at $p$ of radius $r$.
\end{thm}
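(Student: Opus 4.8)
The plan is to reconstruct Perelman's argument, whose engine is the monotonicity of his entropy functional along the flow. I would organize everything around three interlocking estimates: a uniform bound on the Ricci potential of $\omega_t$ (which controls $\dot\varphi_t$ and, through a trace identity, the scalar curvature), the non-collapsing \eqref{perelman2}, and the diameter bound. The first move is an algebraic reduction. Differentiating $\omega_t=\omega+\mn\del\db\varphi_t$ in time and comparing with \eqref{metricke} and $\Ric(\omega_t)=\omega_t+\mn\del\db f_{\omega_t}$ gives $\mn\del\db\dot\varphi_t=-\mn\del\db f_{\omega_t}$, so that $\dot\varphi_t=-f_{\omega_t}+c(t)$ for a spatial constant $c(t)$; the normalization \eqref{norm} is exactly what controls $c(t)$, so bounding $|\dot\varphi_t|$ is equivalent to bounding the Ricci potential. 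Taking the $\omega_t$-trace of $\Ric(\omega_t)=\omega_t+\mn\del\db f_{\omega_t}$ yields $R_t=n+\Delta_{\omega_t}f_{\omega_t}$, so a $C^2$ estimate on $f_{\omega_t}$ also controls $R_t$. Finally, differentiating \eqref{ke} in time shows that $\dot\varphi_t$ obeys the clean evolution equation $\partial_t\dot\varphi_t=\Delta_{\omega_t}\dot\varphi_t+\dot\varphi_t$; the zeroth-order term $+\dot\varphi_t$ is what defeats a naive maximum principle and forces the use of the entropy.

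The heart of the matter is Perelman's entropy. I would introduce the functional
\[
\mathcal{W}(g,\phi,\tau)=\int_M\bigl[\tau(R+|\nabla\phi|^2)+\phi-n\bigr](4\pi\tau)^{-n/2}e^{-\phi}\,dV_g
\]
together with $\mu(g,\tau)=\inf\{\mathcal{W}(g,\phi,\tau):\int_M(4\pi\tau)^{-n/2}e^{-\phi}\,dV_g=1\}$, and prove its monotonicity under the coupling of \eqref{metricke} with the conjugate heat equation for the minimizer, $\tau$ being held at the value adapted to the normalized flow. Monotonicity gives the uniform lower bound $\mu(\omega_t,\tau)\geq\mu(\omega,\tau)$, which is equivalent to a uniform logarithmic Sobolev inequality, and from the latter one extracts a uniform Sobolev inequality for $(M,\omega_t)$. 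This single analytic fact does most of the work: testing it with a cutoff supported on $B_t(p,r)$ and iterating over dyadic scales forces a Euclidean-type lower bound on $\Vol(B_t(p,r))$, which is precisely \eqref{perelman2}, and its uniform constant makes all the elliptic and parabolic estimates below have bounds independent of $t$.

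For the scalar-curvature and Ricci-potential bounds I would proceed as follows. The lower bound $R_t\geq -C$ is immediate from the maximum principle applied to the evolution of $R_t$, using $|\Ric|^2\geq R^2/n$. The upper bound, together with a gradient bound, comes from applying the maximum principle to combinations of $R_t$, $|\nabla f_{\omega_t}|^2$ and $f_{\omega_t}$ that are constant on gradient shrinking solitons (schematically $R_t+|\nabla f_{\omega_t}|^2-f_{\omega_t}$), where the uniform Sobolev inequality and the normalization $\int_M(e^{f_{\omega_t}}-1)\omega_t^n=0$ control the $C^0$ size of $f_{\omega_t}$ entering the estimate; then $R_t=n+\Delta_{\omega_t}f_{\omega_t}$ gives $|R_t|\leq C$ and the reduction above gives $|\dot\varphi_t|\leq C$. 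The diameter bound is then a volume-comparison argument in Perelman's style: along a minimizing geodesic one extracts disjoint balls of a fixed small radius, each of definite volume by the non-collapsing \eqref{perelman2}, and since the total volume $V$ is fixed there can only be boundedly many of them, so the geodesic cannot be long.

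The main obstacle is the entropy step itself: establishing existence and regularity of the minimizer of $\mathcal{W}(\omega_t,\cdot,\tau)$, proving the differential inequality $\tfrac{d}{dt}\mu\geq 0$ through the delicate Bochner-type computation on the coupled flow, and passing from the logarithmic Sobolev inequality to a genuine Sobolev inequality with time-independent constant. This is exactly Perelman's deepest input. By contrast, the algebraic reduction to the Ricci potential, the maximum-principle estimates on the soliton quantities, and the diameter comparison are comparatively routine once the uniform Sobolev inequality and the non-collapsing are in hand.
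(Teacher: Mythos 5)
Your reductions at the start are correct and match the argument in the reference the paper actually cites for this theorem (the statement is quoted from Perelman via the exposition [ST], with no proof given in the paper itself): $\dot\varphi_t=-f_{\omega_t}+c(t)$ with the normalization \eqref{norm} controlling $c(t)$, the trace identity $R_t=n+\Delta_{\omega_t}f_{\omega_t}$, the evolution $\partial_t\dot\varphi_t=\Delta_t\dot\varphi_t+\dot\varphi_t$, the entropy monotonicity (after rescaling the normalized flow to an unnormalized flow singular in finite time) as the engine, and $R_t\geq -C$ by the maximum principle. But the order of your main steps is circular. The uniform Sobolev inequality extracted from the lower bound on $\mu$ carries a scalar curvature term: schematically $\bigl(\int_M v^{2n/(n-1)}\bigr)^{(n-1)/n}\leq A\int_M\bigl(|\nabla v|^2+\tfrac{R}{4}v^2\bigr)+B\int_M v^2$. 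Testing with a cutoff on $B_t(p,r)$ therefore yields $\Vol(B_t(p,r))\geq\kappa r^{2n}$ only when one already knows $R\leq r^{-2}$ on the ball --- which is exactly the hypothesis in Perelman's $\kappa$-noncollapsing. The unconditional statement \eqref{perelman2} is thus available only \emph{after} the bound $|R_t|\leq C$. Consequently your packing argument for the diameter (disjoint $r_0$-balls of definite volume along a minimizing geodesic, fixed total volume $V$) cannot be run where you place it: it needs the unconditional \eqref{perelman2}, which needs $|R_t|\leq C$, which in your own scheme comes from a $C^0$ bound on the Ricci potential, and that $C^0$ bound is precisely what the diameter bound is supposed to supply. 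Your proposed alternative source for $\|f_{\omega_t}\|_{C^0}$ --- ``the uniform Sobolev inequality and the normalization'' --- does not work: the normalization $\int_M(e^{f_{\omega_t}}-1)\omega_t^n=0$ only pins down the additive constant (it forces $\min f_{\omega_t}\leq 0\leq\max f_{\omega_t}$), and the Sobolev inequality gives no pointwise control on $f_{\omega_t}$ without elliptic estimates whose input $\Delta_t f_{\omega_t}=R_t-n$ is the very quantity being bounded.

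What closes the loop in the actual proof is the step you have omitted, and it is the deepest one, not a routine comparison. From the maximum principle applied to ratios such as $|\nabla f_{\omega_t}|^2/(f_{\omega_t}+2B)$ and $\Delta_t f_{\omega_t}/(f_{\omega_t}+2B)$ one gets $|\nabla f_{\omega_t}|^2\leq C(f_{\omega_t}+2B)$ and $R_t\leq C(f_{\omega_t}+2B)$, hence at most quadratic growth of $f_{\omega_t}$ and $R_t$ in the distance from the point where $f_{\omega_t}$ is minimal. The diameter bound is then proved by contradiction: assuming $\diam(M,\omega_t)$ is huge, one tests the $\mathcal{W}$-functional against cutoff functions supported on large annuli of small volume, using the \emph{conditional} noncollapsing at scales where the quadratic growth keeps the curvature controlled, to force $\mu$ below its uniform lower bound. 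Only after this does one obtain $\|f_{\omega_t}\|_{C^0}\leq C$, hence $|R_t|\leq C$ and $|\dot\varphi_t|\leq C$, and only then does \eqref{perelman2} hold in the unconditional form stated (your packing count is then valid, but a posteriori). A minor point: in real dimension $2n$ the weight in $\mathcal{W}$ should be $(4\pi\tau)^{-n}$, not $(4\pi\tau)^{-n/2}$.
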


\section{Chow Semistability}
In this section we link Chow semistability to a certain functional on the space of K\"ahler potentials. The results in this section follow from work of Donaldson \cite{D2} and S. Zhang \cite{ZhS}.\\

Let us first recall the definition of K-polystability that we will use, which is a special case of Tian's definition \cite{Ti3}. A test configuration for $(M,K_M^{-1})$ is a $\mathbb{C}^*$-equivariant flat family $(\mathcal{M},\mathcal{L})$ over $\mathbb{C}$ such that the generic fiber is isomorphic to $(M, K_M^{-m})$ for some $m>0$. The central fiber $(M_0, L_0)$ inherits a $\mathbb{C}^*$-action, and we assume that $M_0$ is a smooth manifold and so $L_0=K_{M_0}^{-m}$. K-polystability then means that for any test configuration we require the Futaki invariant of the vector field generating the action on $M_0$ to be nonnegative, and equal to zero only when $M_0$ is biholomorphic to $M$. There is a more general definition of Donaldson \cite{D3} that allows singular central fibers but in our situation these are excluded by assumption \eqref{curvature}. In fact in Theorem \ref{main} instead of K-polystability we only need to assume that there is no test configuration with smooth central fiber $M_0$ which has zero Futaki invariant and is not isomorphic to $M$.

Now we recall the definition of asymptotic Chow semistability (we refer the reader to \cite{Wa} for details). 
For each $m$ sufficiently large the line bundle $K_M^{-m}$ is very ample, and so choosing a basis $\{S_i\}$ of holomorphic sections in $H^0(M,K_M^{-m})$ gives an embedding of $M$ inside $\mathbb{P}^{N_m-1}=\mathbb{P}H^0(M,K_M^{-m})^*.$ Associated to this embedding there is a point $\mathrm{Chow}_m(M)$ in the Chow variety of cycles in $\mathbb{P}^{N_m-1}$ of dimension $n$ and degree $d=Vm^n n!$. If we let $\mathbb{G}$ be the Grassmannian of $N_m-n-2$-planes in $\mathbb{P}^{N_m-1}$ and if we call $W=H^0(\mathbb{G},\mathcal{O}(d))$, then 
the Chow variety sits inside the projective space $\mathbb{P}(W^*)$. This projective space has a linearized action of $SL(N_m,\mathbb{C})$, inherited from the natural action on $\mathbb{G}$, which changes $\mathrm{Chow}_m(M)$ by changing the basis $\{S_i\}$. Then Chow semistability of $(M,K_M^{-m})$ means that the $SL(N_m,\mathbb{C})$ orbit of the point $\mathrm{Chow}_m(M)$ is GIT semistable (one can likewise define Chow polystability and stability). Asymptotic Chow semistability then means Chow semistability of $(M,K_M^{-m})$ for all $m$ sufficiently large. By the Kempf-Ness theorem \cite{KN} Chow semistability of $(M,K_M^{-m})$ is equivalent to the fact that the
function 
\begin{equation}\label{below}
\tau\mapsto\log\frac{\|\tau\cdot
\mathrm{Chow}_m(M)\|^2}{\|\mathrm{Chow}_m(M)\|^2}
\end{equation}
 is bounded below on
$SL(N_m,\C)$. Here $\|\cdot\|$ is any norm on the vector space $W$ which is invariant under $SU(N_m)$. 

We now fix $h$ a metric on $K_M^{-1}$ with curvature equal to $\omega$, and for each $m$ we also fix $\{S_i\}$ a basis of $H^0(M,K_M^{-m})$ which is orthonormal with respect to the $L^2$ inner product defined by $h^m$, $\omega^n$. Given a matrix $\tau\in GL(N_m,\C)$ we
 define the corresponding ``algebraic K\"ahler potential'' by
$$\varphi_{\tau}=\frac{1}{m}\log\frac{\sum_{i}| \sum_j\tau_{ij} S_j|^2_{h^m}}{\sum_i |S_i|^2_{h^m}}.$$
This has the following interpretation. We use the sections $\{S_i\}$ to embed $M$
inside $\mathbb{P}H^0(M,K_M^{-m})=\mathbb{P}^{N_m-1}$. This carries a natural K\"ahler form $\omega_{FS}$, the Fubini-Study form associated to the $L^2$ inner product of $h^m, \omega^n$. If we let $\tau$ act
on $\mathbb{P}^{N_m-1}$ via the natural action, then on $M$ we have
\begin{equation}\label{pullback2}
\tau^*\omega_{FS}=\omega_{FS}+m\mn\de\db\varphi_\tau,
\end{equation}
so $\varphi_\tau$ is a K\"ahler potential for $\frac{\omega_{FS}}{m}$.
On the other hand, we also have that
\begin{equation}\label{pullback}
\omega=\frac{\omega_{FS}}{m}-\frac{1}{m}\mn\de\db\log\rho_m(\omega),
\end{equation}
and so the function
$$\psi_{\tau}=\frac{1}{m}\log\sum_{i}\bigl|\sum_j\tau_{ij}S_j\bigr|^2_{h^m}=\varphi_\tau+
\frac{1}{m}\log\rho_m(\omega)$$
is a K\"ahler potential for $\omega$. 
Now if we go back to \eqref{below} and we choose the norm $\|\cdot\|$ suitably (see \cite{PS1}) then a theorem of Zhang
\cite{ZhS} (see also \cite{Pau,PS1}) gives that
\begin{equation}\label{chowf}
F^0_{\frac{\omega_{FS}}{m}}(\varphi_{\tau})=-\frac{1}{Vm(n+1)}
\log\frac{\|\tau \cdot
\mathrm{Chow}_m(M)\|^2}{\|\mathrm{Chow}_m(M)\|^2},
\end{equation}
for all $\tau\in SL(N_m,\C)$.
We now introduce a slight variant of $F^0_\omega$, following Donaldson \cite{D2}. Given a K\"ahler potential $\varphi$ we let $h_\varphi=h e^{-\varphi}$, which is a metric on $K_M^{-1}$ with curvature equal to $\omega_\varphi$.
The $L^2$ inner product on $H^0(M,K_M^{-m})$ defined by $h_\varphi^m$, $\omega_\varphi^n$, can be represented as a positive definite Hermitian matrix, with respect to the fixed basis $\{S_i\}$. Explicitly, this means that we
set
\begin{equation}\label{hma}
H_{i\overline{j},\varphi}=\int_M \langle S_i, S_j\rangle_{h^m_\varphi}\frac{\omega_\varphi^n}{n!}.
\end{equation}
We then let
$$c_\varphi=\log |\det H_{i\overline{j},\varphi}|.$$
Notice that changing the basis $\{S_i\}$ does not affect $c_\varphi$, which depends only on $\varphi$ and the choice of $h$. Also $c_\varphi$ changes smoothly if $\varphi$ does. We then define the functional
$$\ti{\mathcal{L}}_m(\varphi)=\frac{c_\varphi}{N_m}-mF^0_\omega(\varphi).$$
If $\varphi_t$ is a smooth path of K\"ahler potentials then the variation of $\ti{\mathcal{L}}_m$ can be computed as follows: since $c_t=c_{\varphi_t}$ is independent of the choice of $S_i$ we can pick them so that for a fixed time $t$ we have $H_{i\overline{j},t}=\lambda_i^2 \delta_{ij}$ where the numbers $\lambda_i$ are real and nonzero. Then the holomorphic sections $T_i=\frac{S_i}{\lambda_i}$ are orthonormal with respect to $h_{t}^m$, $\omega_{t}^n$, and we have
\begin{equation}
\begin{split}
\frac{\de}{\de t}c_t&=H^{i\overline{j}}_t \frac{\de}{\de t}H_{i\overline{j},t}=\sum_i\frac{1}{\lambda_i^2}\int_M |S_i|^2_{h^m}
\frac{\de}{\de t}\left(e^{-m\varphi_t}\frac{\omega_t^n}{n!}\right)\\
&=\sum_i\frac{1}{\lambda_i^2}\int_M |S_i|^2_{h_t^m}(-m\dot{\varphi}_t+\Delta_t
\dot{\varphi}_t)\frac{\omega_t^n}{n!}\\
&=\sum_i \int_M |T_i|^2_{h_t^m}(-m\dot{\varphi}_t+\Delta_t
\dot{\varphi}_t)\frac{\omega_t^n}{n!}\\
&=\int_M \rho_m(\omega_t)(-m\dot{\varphi}_t+\Delta_t
\dot{\varphi}_t)\frac{\omega_t^n}{n!}\\
&=\int_M \dot{\varphi_t}(\Delta_t\rho_m(\omega_t)-m\rho_m(\omega_t))\frac{\omega_t^n}{n!}.
\end{split}
\end{equation}
So we get
\begin{equation}\label{varl}
\frac{\de}{\de t}\ti{\mathcal{L}}_m(\varphi_t)=\frac{1}{N_m}\int_M \dot{\varphi}_t 
\left(\Delta_t\rho_m(\omega_t)-m\rho_m(\omega_t)+\frac{mN_m}{V}\right)
\frac{\omega_t^n}{n!}.
\end{equation}
We then have the following
\begin{prop}\label{lowerbo}
The pair $(M,K_M^{-m})$ is Chow semistable if and only if there exists a constant $C$, that might depend on $m$, such that 
\begin{equation}\label{lowerb}
\ti{\mathcal{L}}_m(\varphi)\geq -C,
\end{equation}
for all K\"ahler potentials $\varphi$.
\end{prop}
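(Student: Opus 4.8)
The plan is to prove the two implications separately, moving between the infinite-dimensional space of all Kähler potentials and the finite-dimensional family of ``algebraic'' potentials $\psi_\tau$, $\tau\in SL(N_m,\C)$, on which the functional is tied to the Kempf--Ness function of the Chow point.

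First I would record the clean direction, that boundedness below of $\ti{\mathcal{L}}_m$ forces Chow semistability, by proving its contrapositive. If $(M,K_M^{-m})$ is not Chow semistable, then by Kempf--Ness the function in \eqref{below} is unbounded below on $SL(N_m,\C)$, so by Zhang's identity \eqref{chowf} there is a sequence $\tau_k\in SL(N_m,\C)$ with $F^0_{\omega_{FS}/m}(\varphi_{\tau_k})\to+\infty$. Setting $u=\frac{1}{m}\log\rho_m(\omega)$, so that $\omega_u=\omega_{FS}/m$ by \eqref{pullback} and $\psi_\tau=\varphi_\tau+u$, the cocycle identity \eqref{cocycle} gives $F^0_\omega(\psi_{\tau_k})=F^0_{\omega_{FS}/m}(\varphi_{\tau_k})+F^0_\omega(u)\to+\infty$. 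The remaining point is to control the log-det term at these potentials: evaluating the Gram matrix \eqref{hma} at $\varphi=\psi_\tau$ expresses $H_{i\overline{j},\psi_\tau}$, up to the factor $m^{-nN_m}|\det\tau|^{-2}$, through a positive definite Hermitian matrix $B_\tau$ (the center-of-mass matrix of the embedded $\tau(M)$) whose trace is the fixed volume $\frac{1}{n!}\int_M(\tau^*\omega_{FS})^n=m^nV$; since $\det\tau=1$, the AM--GM inequality bounds $\det B_\tau$ from above and hence shows $c_{\psi_\tau}/N_m$ is bounded above uniformly in $\tau\in SL(N_m,\C)$. Therefore $\ti{\mathcal{L}}_m(\psi_{\tau_k})=c_{\psi_{\tau_k}}/N_m-mF^0_\omega(\psi_{\tau_k})\to-\infty$, contradicting \eqref{lowerb}.

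For the converse, that Chow semistability implies \eqref{lowerb} for \emph{all} Kähler potentials, the crux is to reduce the infimum over the full space of potentials to the finite-dimensional family $\{\psi_\tau\}$. Following Donaldson \cite{D2}, I would show that replacing an arbitrary $\varphi$ by the Fubini--Study potential associated to its own inner product $H_\varphi$ does not increase $\ti{\mathcal{L}}_m$; this monotonicity rests on the variation formula \eqref{varl} together with a Jensen/convexity estimate for the functional $\varphi\mapsto c_\varphi$, and yields $\inf_\varphi\ti{\mathcal{L}}_m(\varphi)=\inf_\tau\ti{\mathcal{L}}_m(\psi_\tau)$. On the finite-dimensional family one then uses that, by \eqref{varl}, the critical points of $\ti{\mathcal{L}}_m$ are exactly the balanced metrics, that $\ti{\mathcal{L}}_m(\psi_\tau)$ is convex along the one-parameter subgroups $\tau(s)=e^{s\xi}$, and that its asymptotics along such subgroups are governed, via \eqref{chowf} and the cocycle, by the Chow weight, so that semistability delivers the uniform lower bound.

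I expect the genuine obstacle to be this second step. The first direction is essentially immediate once one has the uniform upper bound on $c_{\psi_\tau}/N_m$, but the converse requires both the monotonicity of $\ti{\mathcal{L}}_m$ under the Fubini--Study projection and a careful control of the log-det contribution to $\ti{\mathcal{L}}_m(\psi_\tau)$ along degenerating one-parameter subgroups, where it competes at the same linear order as the Kempf--Ness function. Matching these two and invoking the convexity theory of Donaldson's functional is the heart of the argument.
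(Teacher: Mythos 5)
Your first direction is fine and is, up to taking contrapositives, exactly the paper's proof of that implication: the same cocycle identity reduces matters to the potentials $\psi_\tau$, and the same arithmetic--geometric mean bound on the Gram matrix in the basis $\ti{S}_i=\sum_j\tau_{ij}S_j$, whose trace is the fixed volume of $M$ in $\frac{\tau^*\omega_{FS}}{m}$, bounds $c_{\psi_\tau}/N_m$ above uniformly for $\tau\in SL(N_m,\C)$ (your normalization factors are slightly off but harmless).

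The converse, however, has a genuine gap exactly at the spot you flag, and your proposed route does not close it. Your reduction of an arbitrary $\varphi$ to the family $\{\psi_\tau\}$ via Jensen's inequality is sound (it is the paper's estimate \eqref{useful3}), but you are then left needing $\ti{\mathcal{L}}_m(\psi_\tau)\geq -C$ uniformly on $SL(N_m,\C)$, and the natural splitting via the cocycle, $\ti{\mathcal{L}}_m(\psi_\tau)=\frac{c_{\psi_\tau}}{N_m}-mF^0_{\frac{\omega_{FS}}{m}}(\varphi_\tau)+C_0$, is useless here: semistability and \eqref{chowf} control only the second term, while $c_{\psi_\tau}\to-\infty$ along degenerating one-parameter subgroups at the same linear rate, as you yourself observe. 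Your fix---convexity of $s\mapsto\ti{\mathcal{L}}_m(\psi_{e^{s\xi}})$ plus asymptotic slopes given by Chow weights---is not carried out and faces two obstacles: convexity of the log-determinant term along such rays is not established, and even granting it, lower bounds along each individual one-parameter subgroup do not yield a bound uniform over the whole group for a merely semistable (non-polystable) Chow point; that uniformity is precisely the content of the Kempf--Ness theorem, which you would have to reprove for the modified functional. The paper sidesteps all of this with one algebraic observation you are missing: if $\tau\in GL(N_m,\C)$ is the change of basis making $\{\sum_j\tau_{ij}S_j\}$ orthonormal for $h_\varphi^m$, $\omega_\varphi^n$, then \emph{exactly} $c_\varphi=-2\log|\det\tau|$, so after the Jensen step $\ti{\mathcal{L}}_m(\varphi)\geq -\frac{2}{N_m}\log|\det\tau|-mF^0_{\frac{\omega_{FS}}{m}}(\varphi_\tau)-C$; setting $\ti{\tau}=(\det\tau)^{-\frac{1}{N_m}}\tau\in SL(N_m,\C)$, one has $\varphi_{\ti{\tau}}=\varphi_\tau-\frac{2}{mN_m}\log|\det\tau|$, whence the right-hand side equals $-mF^0_{\frac{\omega_{FS}}{m}}(\varphi_{\ti{\tau}})-C\geq -C$ directly by \eqref{chowf} and Kempf--Ness. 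In other words, the log-determinant term does not compete with the Kempf--Ness function at all: it is exactly the renormalization that moves $\tau$ into $SL(N_m,\C)$, so no reduction to the family $\{\psi_\tau\}$, no convexity, and no one-parameter-subgroup asymptotics are needed.
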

In the rest of the paper we will only use one implication, but we include the proof of both for completeness.
\begin{proof}
First we prove that Chow semistability implies the lower boundedness of 
$\ti{\mathcal{L}}_m$.
For each potential $\varphi$ set $h_\varphi= he^{-\varphi}$ and $\omega_\varphi=\omega+\mn\de\db\varphi$. Choose $\{S_i(\varphi)\}$ a basis of $H^0(M,K_M^{-m})$ which is orthonormal with respect to the $L^2$ inner product defined by $h_\varphi^m$, $\omega_\varphi^n$. Then we can write
$$S_i(\varphi)=\sum_j \tau_{ij}S_j,$$
for some matrix $\tau=(\tau_{ij})\in GL(N_m,\C)$ that depends on $\varphi$. From the definition we get
$$c_\varphi=-2\log|\det\tau|,$$
and so
\begin{equation}\label{useful1}
\ti{\mathcal{L}}_m(\varphi)=-\frac{2}{N_m}\log|\det\tau|-mF^0_\omega(\varphi).
\end{equation} 
We now observe that from the cocycle formula \eqref{cocycle} we have
\begin{equation}\label{useful2}
\begin{split}
F^0_\omega(\varphi)-F^0_\omega(\psi_{\tau})&=-F^0_{\omega_\varphi}(\psi_{\tau}
-\varphi)\\
&=-J_{\omega_\varphi}(\psi_{\tau}-\varphi)+
\frac{1}{V}\int_M(\psi_{\tau}-\varphi)\frac{\omega_\varphi^n}{n!}\\
&\leq\frac{1}{V}\int_M(\psi_{\tau}-\varphi)\frac{\omega_\varphi^n}{n!},
\end{split}
\end{equation}
where we have also used the fact that $J_{\omega_\varphi}\geq 0$.
On the other hand we have
$$\int_M e^{m(\psi_{\tau}-\varphi)}\frac{\omega_\varphi^n}{n!}=
\sum_i\int_M |S_i(\varphi)|^2_{h^m}e^{-m\varphi}\frac{\omega_\varphi^n}{n!}=N_m,$$
and so by Jensen's inequality
$$\frac{m}{V}\int_M(\psi_{\tau}-\varphi)\frac{\omega_\varphi^n}{n!}\leq \log(N_m/V).$$
Together with \eqref{useful2} this gives
\begin{equation}\label{useful3}
F^0_\omega(\varphi)-F^0_\omega(\psi_{\tau})\leq \frac{\log(N_m/V)}{m}.
\end{equation}
Then the cocycle formula \eqref{cocycle} gives
\begin{equation}\label{useful8}
F^0_{\frac{\omega_{FS}}{m}}(\varphi_{\tau})=F^0_{\omega}(\psi_{\tau})+
F^0_{\frac{\omega_{FS}}{m}}\left(-\frac{1}{m}\log\rho_m(\omega)\right),
\end{equation}
and this together with \eqref{useful3} gives
\begin{equation}\label{useful4}
-mF^0_\omega(\varphi)\geq -m F^0_\omega(\psi_{\tau})-C\geq 
-mF^0_{\frac{\omega_{FS}}{m}}(\varphi_{\tau})-C.
\end{equation}
This and \eqref{useful1} give
\begin{equation}\label{useful5}
\ti{\mathcal{L}}_m(\varphi)\geq -\frac{2}{N_m}\log|\det\tau|-mF^0_{\frac{\omega_{FS}}{m}}(\varphi_{\tau})-C.
\end{equation}
We now set 
$$\ti{\tau}= (\det\tau)^{-\frac{1}{N_m}}\tau,$$
so now $\ti{\tau}\in SL(N_m,\C)$ and we notice that
$$\varphi_{\ti{\tau}}=\varphi_{\tau}-\frac{2}{mN_m}\log|\det\tau|,$$
and so
\begin{equation}\label{useful6}
-\frac{2}{N_m}\log|\det\tau|-mF^0_{\frac{\omega_{FS}}{m}}(\varphi_{\tau})=
-mF^0_{\frac{\omega_{FS}}{m}}(\varphi_{\ti{\tau}})\geq -C,
\end{equation}
by Chow semistability and \eqref{chowf}. Combining \eqref{useful5} with \eqref{useful6} finally gives
\begin{equation}\label{bddd}
\ti{\mathcal{L}}_{m}(\varphi)\geq -C.
\end{equation}
To show the other implication we assume that \eqref{bddd} holds and we let $\tau$ be any matrix in $SL(N_m,\C)$.
By \eqref{chowf} it is enough to prove that the function
$$-mF^0_{\frac{\omega_{FS}}{m}}(\varphi_{\tau})$$
has a uniform lower bound independent of $\tau$. By \eqref{useful8} we have
$$-mF^0_{\frac{\omega_{FS}}{m}}(\varphi_{\tau})\geq -m F^0_\omega(\psi_{\tau})-C,$$
and we have
$$\ti{\mathcal{L}}_m(\psi_\tau)=\frac{c_{\psi_\tau}}{N_m}-mF^0_\omega(\psi_\tau),$$
so we are reduced to showing that $c_{\psi_\tau}$ is bounded above independent of $\tau$. Notice that in \eqref{hma} if we use the basis $\ti{S}_i=\sum_j \tau_{ij}S_j$ instead of $S_i$ we get a different matrix 
$\ti{H}_{i\overline{j},\psi_\tau}$ but its log determinant is the same.
Using the definitions we have that
$$\omega_{\psi_\tau}=\frac{\tau^*\omega_{FS}}{m},$$
and
$$\ti{H}_{i\overline{j},\psi_\tau}=\int_M\frac{\langle \ti{S}_i,\ti{S}_j\rangle_{h^m}}
{\sum_{k}|\sum_l\tau_{kl}S_l|^2_{h^m}}\cdot\frac{(\tau^*\omega_{FS})^n}{m^n n!},$$
so using the arithmetic-geometric mean inequality we get
$$\frac{c_{\psi_\tau}}{N_m}\leq \log\left(\frac{1}{N_m}\int_M
\frac{\sum_i |\ti{S}_i|^2_{h^m}}{\sum_i |\ti{S}_i|^2_{h^m}}\cdot
\frac{(\tau^*\omega_{FS})^n}{m^n n!}\right),$$
and since the integral above is just the volume of $M$ in the metric $\frac{\tau^*\omega_{FS}}{m}$, this is bounded independent of $\tau$.
\end{proof}

\section{The Main Theorem}
In this section we prove Theorem \ref{main} by relating the behavior of the functionals $\ti{\mathcal{L}}_m$ to the Mabuchi energy.\\

\begin{proof}[Proof of Theorem \ref{main}]
First of all we use Perelman's estimate \eqref{perelman2}: this together
 with \eqref{curvature}, Perelman's diameter bound \eqref{perelman} and Theorem 4.7 of \cite{CGT} gives a uniform lower bound for the injectivity radius of $(M,\omega_t)$ independent of $t$. Then Hamilton's compactness theorem \cite{Ha2} gives that for any sequence $t_i\to\infty$
we can find a subsequence (still denoted $t_i$), a K\"ahler structure $(\omega_\infty, J_\infty)$ on the differentiable manifold $M$ and diffeomorphisms $F_i:M\to M$
such that $\omega_i=F_i^*\omega_{t_i}\to \omega_\infty$ and
$J_i={F_i^{-1}}_*\circ J\circ {F_i}_*\to J_\infty$ smoothly.
We will denote by $\de_i$ (resp. $\de_\infty$) the $\de$-operators of $J_i$ (resp. $J_\infty$).
An argument of \v{S}e\v{s}um-Tian (see \cite{ST} or \cite{PSSW2} p. 662) shows that
$(\omega_\infty, J_\infty)$ is a K\"ahler-Ricci soliton, and so it satisfies
\begin{equation}\label{soliton}
\Ric(\omega_\infty)=\omega_\infty+\mn\de_\infty\db_\infty\psi,
\end{equation}
for a smooth function $\psi$ whose gradient is a $J_\infty$-holomorphic vector field. Such a function $\psi$ is only defined up to addition of a constant, but we can choose it by requiring that
$$\int_M (e^\psi-1)\frac{\omega_\infty^n}{n!}=0.$$
Since along the flow we have that
$$\Ric(\omega_t)=\omega_t+\mn\de\db f_t,$$
where $f_t$ is the Ricci potential of $\omega_t$,
it follows that the functions $F_i^* f_{t_i}$ will converge smoothly to $\psi$.
In fact $F_i^*\de\db f_{t_i}=\de_i\db_i F_i^*f_{t_i}$ converges smoothly to
$\de_\infty\db_\infty \psi$, and the statement follows because of the normalizations we chose.
Notice that $f_t$ is equal to $\dot\varphi_t$ up to a constant.

We let $\mathcal{M}_\omega(\varphi_t)$ be the Mabuchi energy, normalized so that $\mathcal{M}_\omega(\varphi_0)=0.$
Recall that the variation of the Mabuchi energy is
$$\frac{\de}{\de t}\mathcal{M}_\omega(\varphi_t)=-\frac{1}{V}\int_M \dot{\varphi}_t(R_t-n)\frac{\omega_t^n}{n!},$$
while the variation of $\ti{\mathcal{L}}_m$ was computed in \eqref{varl} to be
$$\frac{\de}{\de t}\ti{\mathcal{L}}_m(\varphi_t)=\frac{1}{N_m}\int_M \dot{\varphi}_t 
\left(\Delta_t\rho_m(\omega_t)-m\rho_m(\omega_t)+\frac{mN_m}{V}\right)
\frac{\omega_t^n}{n!}.$$
For a fixed metric $\omega$ the Tian-Yau-Zelditch-Catlin expansion \eqref{rr4} says that as $m\to\infty$ we have
$$\rho_m(\omega)\approx m^n+\frac{R}{2}m^{n-1}+O(m^{n-2}),$$
Recalling that by Riemann-Roch \eqref{rr} we also have
$$N_m\approx Vm^n+\frac{nV}{2}m^{n-1}+O(m^{n-2}),$$
we get that for a fixed metric $\omega$
$$\Delta\rho_m(\omega)-m\rho_m(\omega)+\frac{mN_m}{V}\approx 
\frac{m^{n}}{2}(n-R)+O(m^{n-1}).$$
We claim that this still holds uniformly along the flow.
\begin{prop}\label{zeld}
Given any $k, m_0$ and $\ve>0$ there exist an $m\geq m_0$ and a $t_0>0$ such that
for all $t\geq t_0$ we have
$$\frac{1}{m^{n-1}}\left\|\rho_m(\omega_t)-m^n-\frac{R_t}{2}m^{n-1}\right\|_{C^k(\omega_t)}
\leq\ve.$$
\end{prop}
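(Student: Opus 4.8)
The plan is to reduce the proposition to a version of the Tian--Yau--Zelditch--Catlin expansion (Theorem \ref{zc}) that is uniform along the flow, and then to truncate it at the second term. Concretely, I would first establish the following uniform statement: for every $k$ there is a constant $C$, depending only on $k$, $n$ and the geometry of the flow, such that
\[
\left\|\rho_m(\omega_t)-m^n-\frac{R_t}{2}m^{n-1}-a_2(\omega_t)m^{n-2}\right\|_{C^k(\omega_t)}\leq C\,m^{n-3}
\]
for all $t\geq 1$ and all $m\geq 1$. Granting this, the proposition is immediate: each coefficient $a_i(\omega_t)$ is a universal polynomial in the curvature of $\omega_t$ and its covariant derivatives, so $\|a_2(\omega_t)\|_{C^k(\omega_t)}\leq C_2$ with $C_2$ uniform in $t$ once the curvature and its derivatives are controlled, and the triangle inequality gives
\[
\frac{1}{m^{n-1}}\left\|\rho_m(\omega_t)-m^n-\frac{R_t}{2}m^{n-1}\right\|_{C^k(\omega_t)}\leq \frac{C_2}{m}+\frac{C}{m^2}\leq\frac{C_2+C}{m}.
\]
Choosing $m\geq\max\bigl(m_0,(C_2+C)/\ve\bigr)$ and $t_0=1$ then finishes the argument.

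Everything thus rests on two inputs: uniform control of the geometry of $\omega_t$, and uniformity of the remainder under such control. For the first, I would note that the curvature bound \eqref{curvature}, together with the local derivative estimates for the Ricci flow (Shi's estimates), yields uniform bounds $|\nabla^j\mathrm{Rm}_t|\leq C_j$ for all $j$ and all $t\geq 1$. Combined with Perelman's diameter bound in \eqref{perelman}, the non-collapsing estimate \eqref{perelman2} and Theorem 4.7 of \cite{CGT} --- exactly as in the opening of the proof of Theorem \ref{main} --- this gives uniformly bounded diameter, curvature, all covariant derivatives of curvature, and a uniform lower bound on the injectivity radius. Hence the family $\{(M,\omega_t,J)\}_{t\geq 1}$ has uniformly bounded geometry and, by Hamilton's compactness theorem \cite{Ha2}, is precompact in the $C^\infty$ Cheeger--Gromov topology; in particular $\|a_2(\omega_t)\|_{C^k(\omega_t)}$ is bounded uniformly in $t$, as was used above.

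For the uniformity of the remainder I would argue by contradiction using this precompactness. If no uniform $C$ existed, there would be sequences $t_i$ in $[1,\infty)$ and $m_i$ along which $m_i^{-(n-3)}$ times the left-hand side of the displayed inequality blows up. By precompactness, after passing to a subsequence and pulling back by the diffeomorphisms $F_i$ of Hamilton's theorem, $(\omega_i,J_i)=(F_i^*\omega_{t_i},J_i)\to(\omega_\infty,J_\infty)$ smoothly; since $\rho_m$, $R$ and the $a_i$ are natural under the biholomorphisms $F_i\colon(M,J_i)\to(M,J)$, the blow-up must occur for the pulled-back data. If the $m_i$ stay bounded this is impossible, because for fixed $m$ the dimension $N_m$ is topological by \eqref{rr} and the spaces $H^0(M,K_M^{-m})$ vary smoothly with the complex structure near $J_\infty$ (Kodaira stability), so $\rho_m(\omega_i,J_i)\to\rho_m(\omega_\infty,J_\infty)$ in $C^k$ and the limit is controlled by Theorem \ref{zc} at the single metric $(\omega_\infty,J_\infty)$.

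The genuinely delicate case, which I expect to be the main obstacle, is $m_i\to\infty$: here continuity at a fixed $m$ is useless, and one needs that the remainder in Theorem \ref{zc} is controlled by finitely many derivatives of the metric, uniformly in $m$, so that bounded geometry forces a uniform bound. This is the assertion that the density-of-states expansion is local and stable under $C^\ell$-small perturbations of the metric and complex structure; it can be extracted either by tracking the metric dependence of the constants in Zelditch's or Catlin's proof, or from a localized analysis of the Bergman kernel, which expresses the remainder through a fixed finite jet of the metric in bounded-geometry coordinates. Once this local uniformity is in hand, the uniformly bounded geometry of $\{(\omega_i,J_i)\}$ contradicts the assumed blow-up, establishing the uniform expansion and hence the proposition.
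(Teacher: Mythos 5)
Your reduction, your use of Shi's estimates \cite{Sh} and Perelman's bounds to get uniformly bounded geometry, and your bounded-$m_i$ case all work, and the latter is essentially the paper's own argument: for a fixed $m$, smooth Cheeger--Gromov convergence $(\omega_i,J_i)\to(\omega_\infty,J_\infty)$ plus constancy of $N_m$ (Riemann--Roch, since the Chern forms converge) plus perturbation of the kernel of the $\bar{\de}$-Laplacians (Lemma 4.3 of \cite{Ko2}, i.e.\ the implicit function theorem) gives $\rho_m(\omega_i)\to\rho_m(\omega_\infty)$ in $C^k$. The genuine gap is exactly the case you flag, $m_i\to\infty$: there you need the remainder in Theorem \ref{zc} to be bounded by a constant depending only on a finite jet of the metric in bounded geometry, uniformly in $m$. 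That does \emph{not} follow from Theorem \ref{zc} as stated (its constant depends on the metric $\omega$ in an uncontrolled way), and ``tracking the metric dependence of the constants'' in \cite{Ze}, \cite{Ct} is not a routine verification but a substantial theorem in its own right, nowhere available in the paper's references. As written, your contradiction scheme is circular in the hard case: the blow-up with $m_i\to\infty$ can only be excluded by the uniform-in-$m$ expansion you set out to prove, so the proposal proves a strictly stronger statement than Proposition \ref{zeld} modulo an unproved key input.

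The paper avoids this difficulty entirely by exploiting the deliberately weak quantifiers of the proposition: only \emph{one} $m\geq m_0$, chosen after $\ve$, needs to work for all large $t$. Negating the statement gives failure at arbitrarily large times; after extracting a Hamilton--Cheeger--Gromov limit via \cite{Ha2}, the classical Theorem \ref{zc} is applied only to the \emph{single} limit metric $\omega_\infty$, yielding a constant $C$ for which one fixes a single $m\geq m_0$ with $C/m\leq \ve_0/4$; the contradiction then requires only the fixed-$m$ continuity $\rho_m(\omega_i)\to\rho_m(\omega_\infty)$, i.e.\ precisely your easy case. No uniformity in $m$ along the flow is ever claimed or needed, and the application in the proof of Theorem \ref{main} is likewise set up so that $m$ is fixed after $\ve_1$. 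If you want your stronger conclusion (remainder $O(m^{n-2})$ or $O(m^{n-3})$ uniformly in $t$ and $m$), you must actually prove a uniform Bergman kernel expansion on families of uniformly bounded geometry; for the proposition at hand, restructure the contradiction so that $m$ is fixed after passing to the limit, and the $m_i\to\infty$ case never arises.
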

The proof of this proposition is postponed.
As above applying this with $k=2$ and $m_0, \ve$ to be specified later, we get
\begin{equation}\label{uno}
\frac{1}{m^n}|\Delta_t\rho_m(\omega_t)|\leq 
\frac{\ve}{m}+\frac{|\Delta_t R_t|}{2m},
\end{equation}
while Riemann-Roch implies that
\begin{equation}\label{due}
\left|\frac{N_m}{Vm^{n-1}}-\left(m+\frac{n}{2}\right)\right|\leq\frac{C_0}{m}.
\end{equation}
Proposition \ref{zeld} also gives that
\begin{equation}\label{tre}
\left|\frac{\rho_m(\omega_t)}{m^{n-1}}-m-\frac{R_t}{2}\right|\leq\ve.
\end{equation}
Putting together \eqref{uno}, \eqref{due} and \eqref{tre} gives
$$\left|\frac{1}{m^n}\left(\Delta_t\rho_m(\omega_t)-m\rho_m(\omega_t)+\frac{mN_m}{V}
\right)-\frac{1}{2}(n-R_t)\right|\leq 2\ve+
\frac{C_0}{m}+\frac{|\Delta_t R_t|}{2m}.$$
From the boundedness of curvature and Shi's estimates \cite{Sh}, it follows that for all $t\geq 0$ we have
$$|\Delta_t R_t|\leq C_1,$$
for a uniform constant $C_1$.

Using now the fact that $|\dot{\varphi}_t|\leq C_2$ and Riemann-Roch, we get
$$\left| \frac{\de}{\de t}\left(\ti{\mathcal{L}}_m-\frac{\mathcal{M}_\omega}{2}\right)(\varphi_t)\right|\leq
2(2\ve+C_3/m)\frac{1}{V}\int_M|\dot{\varphi}_t|\frac{\omega_t^n}{n!}\leq C_2(2\ve+C_3/m).$$
In particular given any $\ve_1>0$ we can fix $\ve$ and $m_0$ so that
$$C_2(2\ve+C_3/m_0)\leq \ve_1,$$
and moreover $(M,K_M^{-m})$ is Chow semistable for all $m\geq m_0$.
Then Proposition \ref{zeld} with the above arguments gives an $m\geq m_0$ and a $t_0$ such that for all $t\geq t_0$ 
$$\left| \frac{\de}{\de t}\left(\ti{\mathcal{L}}_m-\frac{\mathcal{M}_\omega}{2}\right)(\varphi_t)\right|\leq \ve_1.$$
Integrating this, we get that for all $t\geq t_0$ we have
\begin{equation}\label{bound}
\ti{\mathcal{L}}_m(\varphi_t)\leq\frac{\mathcal{M}_\omega(\varphi_t)}{2}+\ve_1 t+C.
\end{equation}

We now claim that either $M$ already admits a K\"ahler-Einstein metric, or there is a constant $\gamma>0$ such that 
\begin{equation}\label{decay}
\frac{\de}{\de t}\mathcal{M}_\omega(\varphi_t)\leq -\gamma,
\end{equation}
for all $t$ sufficiently large.
In fact, if the above estimate fails, then we can find a sequence of times
$t_i\to\infty$ such that 
$$\frac{\de}{\de t}\mathcal{M}_\omega(\varphi_{t_i})>-\frac{1}{i}.$$
Since 
$$\frac{\de}{\de t}\mathcal{M}_\omega(\varphi_{t_i})=-\frac{1}{V}\int_M|\nabla\dot\varphi_{t_i}|^2_{t_i}\frac{\omega_{t_i}^n}{n!},$$
we get
\begin{equation}\label{small}
\frac{1}{V}\int_M|\nabla\dot\varphi_{t_i}|^2_{t_i}\frac{\omega_{t_i}^n}{n!}<\frac{1}{i}.
\end{equation}
By passing to a subsequence, we may assume that there are diffeomorphisms  $F_i:M\to M$
such that $\omega_i=F_i^*\omega_{t_i}\to \omega_\infty$ a K\"ahler-Ricci soliton as above. Then we have
\begin{equation}\label{small2}
\int_M|\nabla\dot\varphi_{t_i}|^2_{t_i}\frac{\omega_{t_i}^n}{n!}=\int_M |\nabla (F_i^*f_{t_i})|^2_{t_i} \frac{F_i^*\omega_{t_i}^n}{n!}\to\int_M |\nabla \psi|^2_\infty \frac{\omega_\infty^n}{n!},
\end{equation}
so by \eqref{small} we see that the Ricci potential $\psi$ of $\omega_\infty$ must be constant, and so $\omega_\infty$ is a K\"ahler-Einstein metric on 
$J_\infty$ a complex structure on $M$ of which $J$ is a small deformation. Notice that since $\omega_\infty$ is
K\"ahler-Einstein its cohomology class is $c_1(M,J_\infty)$. Moreover we have that the Chern classes $c_1(M,J_i)\to c_1(M,J_\infty)$ and since they are integral classes, we must have $c_1(M,J_i)=c_1(M,J_\infty)$ for all $i$ large.
So we can assume that the canonical bundles $K_{M,i}$ are all isomorphic to $K_{M,\infty}$ as complex line bundles, but with different holomorphic structures. So $(M,J_i, K_{M,i}^{-m})$ is a small deformation of $(M,J_\infty, K_{M,\infty}^{-m})$. Then the fact that $M$ is K-polystable together with Theorem 2 of \cite{Sz} shows that $M$ admits a K\"ahler-Einstein metric. The claim is proved.

Now we assume that $M$ does not admit a K\"ahler-Einstein metric, so that \eqref{decay} holds. We now pick $\ve_1<\gamma/2$, and consequently get an $m$ such that \eqref{bound} holds. But we are also assuming that $m$ is large enough, so that $(M,K_M^{-m})$ is Chow semistable and so by Proposition \ref{lowerbo} we have that \eqref{lowerb} holds.
We can integrate \eqref{decay}, which holds for all $t$ large, and
get
\begin{equation}\label{bdd}
\mathcal{M}_\omega(\varphi_t)\leq -\gamma t+C.
\end{equation}
and this together with \eqref{bound}, \eqref{lowerb} gives
$$-C\leq \ti{\mathcal{L}}_{m}(\varphi_t)\leq \frac{\mathcal{M}_\omega(\varphi_t)}{2}+\ve_1 t+C\leq -(\gamma/2-\ve_1)t+C,$$
for all $t$ large, which is absurd. Hence $M$ must admit a K\"ahler-Einstein metric. Once we know this, results of Perelman-Tian-Zhu \cite{TZ} and Phong-Song-Sturm-Weinkove \cite{PSSW1} imply that the flow converges exponentially fast. In fact, we can avoid the analysis of Perelman-Tian-Zhu in our case: the Theorem in \cite{Sz} that we used constructs a K\"ahler-Einstein metric $g_{KE,i}$ on $(M,J_i)$ for $i$ large as a small $C^\infty$ perturbation of a K\"ahler-Einstein metric $g_{KE,\infty}$ on $(M,J_\infty)$ (here we use the notation $g$ instead of $\omega$ to emphasize that we are considering the Riemannian metrics). In particular the Gromov-Hausdorff distance of $g_{KE,i}$ to $g_{KE,\infty}$ goes to zero as $i$ goes to infinity. But the metrics $(F_i^{-1})^*g_{KE,i}$ are then K\"ahler-Einstein on $(M,J)$ and by the Bando-Mabuchi uniqueness theorem \cite{BM} they must be all isometric to a fixed K\"ahler-Einstein metric $g_{KE}$ on $(M,J)$. Since their Gromov-Hausdorff distance to $g_{KE,\infty}$ is arbitrarily small, it follows that the Gromov-Hausdorff distance between $g_{KE}$ and $g_{KE,\infty}$ is zero, and so they are isometric. By Matsushima's theorem \cite{Ma} the space of holomorphic vector fields of $(M,J)$ is the complexification of the space of Killing vector fields of $g_{KE}$, but this is the same as the space of Killing vector fields of $g_{KE,\infty}$. It follows that $(M,J)$ and $(M,J_\infty)$ have the same dimension of holomorphic vector fields. By the argument in the proof of Theorem 5 in \cite{Sz} this implies that $J$ must be biholomorphic to $J_\infty$. So we have shown that there is a sequence of times $t_i$ and diffeomorphisms $F_i$ such that the metrics $F_i^*\omega_{t_i}$ converge smoothly to a K\"ahler-Einstein metric on $(M,J)$.
Then by a theorem of Bando-Mabuchi \cite{BM} the Mabuchi energy $\mathcal{M}_\omega$ has a lower bound, and the arguments in section 2 of \cite{PS2} show that $$\frac{\de}{\de t}\mathcal{M}_\omega(\varphi_t)\to 0,$$
as $t\to\infty$. This together with the above arguments imply that given any sequence $t_i\to\infty$ we can find a subsequence, still denoted $t_i$, and diffeomorphisms $F_i$ such that $F_i^*\omega_{t_i}$ converges smoothly to a K\"ahler-Einstein metric on $(M,J)$. A contradiction argument then implies that the flow converges modulo diffeomorphisms: there exists $\omega_\infty$ a K\"ahler-Einstein metric on $(M,J)$ and diffeomorphisms $F_t:M\to M$ such that $F_t^*\omega_t$ conveges smoothly to $\omega_\infty$.
Then \cite{PSSW1} shows that the original flow $\omega_t$ converges to a K\"ahler-Einstein metric exponentially fast.
\end{proof}
\begin{proof}[Proof of Proposition \ref{zeld}]
If the conclusion is not true, then there are a $k, m_0$ and $\ve_0>0$ such that for all $m\geq m_0$ and $i\geq 1$ there is a $t_i\geq i$ such that
$$\frac{1}{m^{n-1}}\left\|\rho_{m}(\omega_{t_i})-{m}^n-
\frac{R_{t_i}}{2}{m}^{n-1}\right\|_{C^k(\omega_{t_i})}\geq \ve_0.$$
Up to a subsequence, we may assume that $t_i\to\infty$ and that there are diffeomorphisms $F_i:M\to M$
such that $\omega_i=F_i^*\omega_{t_i}$ and
$J_i={F_i^{-1}}_*\circ J\circ {F_i}_*$ converge smoothly to some limit 
$\omega_\infty$ and $J_\infty$ respectively.
We remark that while the complex structures $J_i$ are all biholomorphic to each other, they might not be biholomorphic to $J_\infty$. Notice also that
the $C^k$ norms involved are invariant under diffeomorphisms, and that
$F_i^*R_{t_i}=R_i$, $F_i^*\rho_m(\omega_{t_i})=\rho_m(\omega_i)$. 
The Tian-Yau-Zelditch-Catlin expansion applied to $\omega_\infty$ 
gives that there exists a uniform constant $C$ such that for all $m$ we have
$$\frac{1}{m^{n-1}}\left\|\rho_{m}(\omega_{\infty})-{m}^n-
\frac{R_{\infty}}{2}{m}^{n-1}\right\|_{C^k(\omega_\infty)}\leq\frac{C}{m}.$$
Moreover since $\omega_i$ converges smoothly to $\omega_\infty$, it follows that the $C^k$ norms they define are uniformly equivalent, so we will also have
$$\frac{1}{m^{n-1}}\left\|\rho_{m}(\omega_{\infty})-{m}^n-
\frac{R_{\infty}}{2}{m}^{n-1}\right\|_{C^k(\omega_i)}\leq\frac{C}{m}.$$
Then we get
\begin{equation*}
\begin{split}
\ve_0&\leq \frac{1}{m^{n-1}}\left\|\rho_{m}(\omega_{i})-{m}^n-
\frac{R_{i}}{2}{m}^{n-1}\right\|_{C^k(\omega_i)}\\
&\leq
\frac{1}{2}\|R_i-R_\infty\|_{C^k(\omega_i)}+\frac{1}{m^{n-1}}\|\rho_m(\omega_i)-\rho_m(\omega_\infty)\|_{C^k(\omega_i)}
+\frac{C}{m}.
\end{split}
\end{equation*}
We now fix $m\geq m_0$ such that $C/m\leq \ve_0/4$. Since $\omega_i$ converges smoothly to $\omega_\infty$, when $i$ is sufficiently large we will have
$$\frac{1}{2}\|R_i-R_\infty\|_{C^k(\omega_i)}\leq \frac{\ve_0}{4}.$$
We now claim that when $i$ is large we will also have
$$\frac{1}{m^{n-1}}\|\rho_m(\omega_i)-\rho_m(\omega_\infty)\|_{C^k(\omega_i)}\leq\frac{\ve_0}{4},$$
which will give a contradiction. In fact we will show that, for $m$ fixed as above, the function $\rho_m(\omega_i)$ converges smoothly to $\rho_m(\omega_\infty)$ as $i$ goes to infinity. This can be done in several ways, for example using the implicit function theorem, or the $L^2$ estimates for the $\bar{\de}$ operator. We choose the first way because it is easier, though the second way gives more precise estimates.
As remarked earlier we can assume that the canonical bundles $K_{M,i}$ are all isomorphic to $K_{M,\infty}$ as complex line bundles, but with different holomorphic structures. Fix $h_\infty$ a metric on $K_{M,\infty}^{-1}$ with curvature $\omega_\infty$, and perturb it to a family of metrics $h_i$ on $K_{M,i}^{-1}$ with curvature $\omega_i$ that converge smoothly to $h_\infty$. Given $S$ a holomorphic section of $K_{M,\infty}^{-m}$ we wish to perturb $S$ to a family $S_i$ of holomorphic sections of $K_{M,i}^{-m}$ that converge smoothly to $S$.
Once this is done, it is clear that $\rho_m(\omega_i)$ converges smoothly to $\rho_m(\omega_\infty)$, and we are done.
Since $m$ can be assumed to be large, by Riemann-Roch we can write the dimension of 
$H^0(M,K_{M,\infty}^{-m})$ as an integral over $M$ of Chern forms of $\omega_\infty$:
$$\dim H^0(M,K_{M,\infty}^{-m})=\int_M \mathrm{ch}(K_M^{-m})\wedge\mathrm{Todd}(M,\omega_\infty,J_\infty),$$
 and we can do the same for $\omega_i, J_i$. But since $\omega_i$ and $J_i$ converge smoothly to $\omega_\infty$ and $J_\infty$, it follows that the Riemann-Roch integrals are equal, and so the dimension of 
$H^0(M,K_{M,\infty}^{-m})$ is the same as $N_m$. This means that we have a sequence of elliptic operators, $\square_{\bar{\de}_i}$, acting on $\Gamma(M,K_M^{-m})$ (more precisely on Sobolev $W^{r,2}$ sections of this line bundle with $r\geq n+k+1$ to make them $C^k$) which converge smoothly to $\square_{\bar{\de}_\infty}$ (which acts on the same space) and such that the dimension of their kernel is the same as in the limit. Then Lemma 4.3 in \cite{Ko2}, which is a simple consequence of the implicit function theorem, ensures that any element in the kernel of $\square_{\bar{\de}_\infty}$ can be smoothly deformed to a sequence of elements in the kernel of $\square_{\bar{\de}_i}$. 
\end{proof}

\end{document}